\DeclareMathAlphabet\eufrak{U}{euf}{m}{n}
\SetMathAlphabet\eufrak{bold}{U}{euf}{b}{n}
\theoremstyle{plain}
\newtheorem{theorem}{Theorem}[section]
\newtheorem{proposition}[theorem]{Proposition}
\newtheorem{lemma}[theorem]{Lemma}
\theoremstyle{definition}
\newtheorem{definition}[theorem]{Definition}
\theoremstyle{remark}
\newtheorem{remark}[theorem]{Remark}
\begin{document}

\title{L\'evy processes on the Lorentz-Lie algebra}

\author{Ameur Dhahri}
\address{Dipartimento di Matematica, Politecnico di Milano,
Piazza Leonardo da Vinci 32, I-20133 Milano, Italy}
\email{ameur.dhahri@polimi.it}

\author{Uwe Franz}
\address{Laboratoire de math\'ematiques de Besan\c{c}on,
Universit\'e de Bourgogne Franche-Comt\'e,
16, route de Gray, 25 030 Besan\c{o}n cedex, France}
\email{uwe.franz@univ-fcomte.fr}

\begin{abstract}
L\'evy processes in the sense of Sch\"urmann on the Lie algebra of the Lorentz grouop are studied. It is known that only one of the irreducible unitary representations of the Lorentz group admits a non-trivial one-cocycle. A Sch\"urmann triple is constructed for this cocycle and the properties of the associated L\'evy process are investigated. The decommpositions of the restrictions of this triple to the Lie subalgebras $so(3)$ and $so(2,1)$ are described.
\end{abstract}

\maketitle

\section*{Introduction}

Factorisable representations of current groups and current algebras have a long history, cf. \cite{araki69/70,parthasarathy+schmidt72,guichardet72}, and they played an important role in the development of quantum stochastic calculus, see \cite{streater00} and the references therein.

Factorisable representations of current groups of a Lie group $G$ can be viewed as L\'evy processes in the sense of Sch\"urmann \cite{schuermann93} on the level of the associated universal enveloping algebra $U(\eufrak{g})$ or the Lie algebra $\eufrak{g}$, see \cite{franz04,franz+privault16}. We followed this approach in \cite{accardi+franz+skeide02}, where we associated classical L\'evy processes to the representation introduced in \cite{vershik+al73} and studied, e.g., their marginal distributions. Furthermore, in \cite{accardi+dhahri09a,accardi+dhahri09b}, this approach was used to define ``quadratic'' exponential vectors and a ``quadratic'' second quantization functor.

Interesting factorisable representations of current groups of a Lie group exist only if the Lie group has a representation which admit a non-trivial cocycle, see \cite{vershik+al74,vershik+karpushev82,graev+vershik05}. If we restrict our attention to unitary representations and simple Lie groups, then this leaves only the two series. $G=SO(n,1)$ and $G=SU(n,1)$. These are the simple Lie groups which do not have Kazhdan's property (T). According to Graev and Vershik  \cite{graev+vershik05,vershik+graev06}, this fact was first observed in 1973-74 by Gelfand, Graev, and Vershik \cite{vershik+al74} (but we where not able to confirm this claim from the paper's English translation), according to Shalom \cite{shalom00} this was first proved by Delorme \cite{delorme77} and Hotta and Wallach \cite{hotta+wallach75}.

In \cite{accardi+franz+skeide02}, we focussed on the lowest-dimensional case, i.e.\ the unique non-compact form $so(2,1)\cong su(1,1)\cong sl(2,\mathbb{R})$ of the unique simple Lie algebra of rank one. In the present paper we study the Lie algebra $so(3,1)$ of the Lorentz group.

In Section \ref{sec-lorentzgroup}, we recall the definition of the Lorentz group and its Lie algebra and we introduce some notations which we shall use in this paper. For the purpose of self-containedness we also recall several facts about their unitary representations.

In Section \ref{sec-levy},  we recall the definitions of L\'evy processes and Sch\"urmann triples on real Lie algebras. We also construct a Sch\"urmann triple on $so(3,1)$, which has a non-trivial 1-cocycle. From the general theory mentioned above, it is clear that this Sch\"urmann triple is unique up to rescaling and adding a coboundary. Therefore the L\'evy process associated to this Sch\"urmann triple must be the infinitesimal version of the factorisable representations on $SO(3,1)$ studied in \cite{graev+vershik05,vershik+graev06}.

In the remaining sections we study restrictions of this Sch\"urmann triple to Lie subalgebras of $so(3,1)$ In Section \ref{sec-restr-so3}, we obtain the decomposition of the restriction to $so(3)$. In Section \ref{sec-restr-so21}, we study the restriction to $so(2,1)$.

\section{The Lorentz group and its Lie algebra}\label{sec-lorentzgroup}

The \emph{Lorentz group} $O(3,1)$ is the the group of all isometries of Minkowski spacetime, i.e., it is the group of all $4\times 4$ matrices that leave invariant the Minkowksi inner product
\[
\left\langle \left(\begin{array}{c} t \\ x \\ y \\ z \end{array}\right), \left(\begin{array}{c} t' \\ x' \\ y' \\ z' \end{array}\right) \right\rangle = tt'-xx'-yy'-zz'.
\]
The identity component $SO(3,1)^+$ of $O(3,1)$ is called the \emph{restricted Lorentz group}. It consists of the $4\times 4$ matrices $A=(a_{jk})\in O(3,1)$ with $\det(A)=+1$ and $a_{11}\ge 1$.

The restricted Lorentz group $SO(3,1)^+$ is isomorphic to the projective special linear group (or M\"obius group) $PSL(2,\mathbb{C})$.

The Lie algebra $so(3,1)\cong sl(2,\mathbb{C})$ of the Lorentz group $O(3,1)$ has as basis $\{H_1,H_2,H_3,F_1,F_2,F_3\}$ with the relations
\begin{gather*}
[H_j,H_k]=i\epsilon_{jk\ell} H_\ell. \qquad [F_j,F_k]= - i\epsilon_{jk\ell} F_\ell, \\
[F_j,H_k] = i \epsilon_{jk\ell} H_\ell,
\end{gather*}
for $j,k,\ell\in\{1,2,3\}$, where $\epsilon_{jk\ell}$ is the Levi-Civita symbol,
\[
\epsilon_{j,k,\ell} = \left\{
\begin{array}{cll}
+1 & \mbox{ if } & (jk\ell)=(1,2,3), (2,3,1)\mbox{ or }(3,1,2), \\
-1 & \mbox{ if } & (jk\ell)=(2,1,3), (3,2,1)\mbox{ or }(1,3,2), \\
0 & \mbox{else}.&
\end{array}
\right.
\]
We shall consider $so(3,1)$ with the involution which makes these six elements hermitian. Then the infinitesimals of unitary representations of $SO(3,1)^+$ are *-representations of $so(3,1)$. For our computations we will also use the bases $\{A_1,A_2,A_3,B_1,B_2,B_3\}$ and $\{H_3,H_-,H_+,K_3,K_-,K_+\}$ with
\begin{gather*}
A_j = \frac{1}{2} (H_j+iF_j),\quad B_j = \frac{1}{2}(H_j-i F_j), \quad j=1,2,3,\\
H_3, \quad H_{\pm} = H_1\pm i H_2, \quad F_3, \quad K_{\pm} = K_1 \pm i K_2.
\end{gather*}
In terms of these bases the relations become
\begin{gather*}
[A_j,A_k]=i\epsilon_{jk\ell} A_\ell, \quad [B_j,B_k]=i\epsilon_{jk\ell}B_\ell, \quad [A_j,B_k] = 0 \\
[H_+,H_-] = 2 H_3, \quad [F_+,F_-] = - 2 H_3, \\
[H_3,H_\pm] = \pm H_\pm, \quad [F_3,F_\pm] = \mp H_\pm, \\
[H_3, F_\pm] = \pm F_\pm, \quad [F_3,H_\pm] = \pm F_\pm, \\
[H_\pm,F_\mp]=\pm F_3,  \quad [H_\pm,F_\pm] = 0,\quad [F_3,H_3] = 0,
\end{gather*}
and
\[
A_j^* = B_j,\quad H_\pm^* = H_\mp, \quad F_\pm^* = F_\mp.
\]
The representation theory of $SO(3,1)^+$ can be found in the monographs \cite{gelfand+al63,naimark64,ruehl70}. Here we are only interested in unitary representations. On the level of the Lie algebra $so(3,1)$ they can be described as follows. For $\ell_0 \in \frac{1}{2}\mathbb{Z}$, $\ell_0\ge 0$, and $\ell_1\in\mathbb{C}$, let
\[
D_{\ell_0\ell_1} = \mathrm{span}\{\xi_{\ell m}; \ell=\ell_0,\ell_0+1,\ldots,m=-\ell,-\ell+1,\ldots,\ell\}
\]
and set
\begin{eqnarray*}
C_\ell &=& \left\{
\begin{array}{ll}
i\frac{\sqrt{(\ell^2-\ell_0^2)(\ell^2-\ell_1^2)}}{\ell \sqrt{4\ell^2-1}} & \mbox{ if } \ell\ge 1, \\
0 & \mbox{ if } \ell =0, \frac{1}{2},
\end{array}\right. \\
A_\ell &=& \left\{
\begin{array}{ll} \frac{i \ell_0\ell_1}{\ell(\ell+1)}& \mbox{ if } \ell>0, \\
0 & \mbox{ if } \ell =0,
\end{array}\right.
\end{eqnarray*}
(note that $\ell=0$ or $\ell=\frac{1}{2}$ can only occur if $\ell_0=0$ or $\ell_0=\frac{1}{2}$, resp.).

We define an action of $so(3,1)$ on $D_{\ell_0\ell_1}$ by
\begin{gather*}
\rho_{\ell_0\ell_1}(H_3) \xi_{\ell m} = m \xi_{\ell m}, \quad \rho_{\ell_0\ell_1}(H_\pm ) \xi_{\ell m} = \sqrt{(\ell\mp m)(\ell \pm m+1)}\xi_{\ell,m\pm1}, \\
\rho_{\ell_0\ell_1}(F_3)\xi_{\ell m} =  C_\ell \sqrt{\ell^2-m^2} \xi_{\ell-1,m}
- m A_\ell \xi_{\ell m} \\
-C_{\ell+1} \sqrt{(\ell+1)^2-m^2} \xi_{\ell+1,m}, \\
\rho_{\ell_0\ell_1}(F_\pm) \xi_{\ell m} = \pm C_\ell \sqrt{(\ell\mp m)(\ell \mp m-1)}\xi_{\ell-1,m\pm 1} \\
- A_\ell \sqrt{(\ell \mp m)(\ell\pm m +1)}\xi_{\ell,m\pm 1}
\pm C_{\ell+1} \sqrt{(\ell \pm m +1)(\ell\pm m +2)} \xi_{\ell+1,m\pm 1}
\end{gather*}
(we use the same basis as \cite{gelfand+al63} and \cite{naimark64}, note that \cite{atakishiyev+suslov85} uses $\psi_{\ell m}=i^{m-\ell}\xi_{\ell m}$ instead).

As was observed in \cite{atakishiyev+suslov85}, ``formally'' there exists also a basis $\{\phi_{m_1,m_2}\}$ on which $A_3$, $A_\pm=A_1\pm i A_2$, $B_3,$ and $B_\pm=B_1\pm i B_2$ act as
\begin{gather*}
A_3 \phi_{m_1,m_2} = m_1 \phi_{m_1,m_2}, \quad A_\pm = \sqrt{(j_1\mp m_1)(j_1\pm m_1+1)} \phi_{m_1\pm1,m_2}, \\
B_3 \phi_{m_1,m_2} = m_2 \phi_{m_1,m_2}, \quad B_\pm = \sqrt{(j_2\mp m_2)(j_2\pm m_2+1)} \phi_{m_1,m_2\pm1},
\end{gather*}
where $j_1=\overline{j_2} = \frac{1}{2}(\ell_0+\ell_1-1)$.

Define an inner product on $D_{\ell_0\ell_1}$ s.t.\ the family $\{\xi_{\ell m}\}$ is an orthonormal system and denote by $H_{\ell_0\ell_1}$ the completion of $D_{\ell_0\ell_1}$.

The representation $\rho_{\ell_0\ell_1}$ is the infinitesimal representation of an irreducible unitary representation of $SO(3,1)^+$ on $H_{\ell_0\ell_1}$ in the following two cases:
\begin{description}
\item[a)]
$\ell_1$ is purely imaginary (and no restriction on $\ell_0\in \frac{1}{2}\mathbb{Z}_+$), this is the \emph{principal series}.
\item[b)]
$\ell_0=0$ and $0\le \ell_1<1$, this is the \emph{supplementary series}.
\end{description}
The representations $\rho_{0,\ell_1}$ and $\rho_{0,-\ell_1}$ with $\ell_1$ purely imaginary are easily seen to be equal, since only the square of $\ell_1$ occurs in their definition. The remaining representations are all inequivalent. Together with the trivial representation $\varepsilon$, which sends $so(3,1)$ identically to $0$, these two families exhaust all irreducible unitary representations of $SO(3,1)^+$. Note that the representation $\rho_{01}$ is not irreducible. It can be decomposed as $\rho_{01}\cong \varepsilon\oplus\rho_{10}$ on
\[
D_{01} = \mathrm{span}\{\xi_{00}\}\oplus\mathrm{span}\{\xi_{\ell m}; \ell\ge 1, m=-\ell,\ldots,\ell\}.
\]

The elements $C_A=2A_3^2 + A_+A_-+A_-A_+$ and $C_B=2B_3^2+B_+B_-+B_-B_+$ generate the center of the universal enveloping algebra $U\big(so(3,1)\big)$ and satisfy $C_A^*=C_B$.
The Casimir invariants
\begin{gather*}
J_1 =  C_A+C_B = 2H_3^2 +H_+H_-+H_-H_+ - 2 F_3^2 - F_+F_- - F_-F_+,\\
J_2 = -i(C_A-C_B)/2 = H_+F_- + H_-F_++F_+H_-+F_-H_+ + 4F_3H_3,
\end{gather*}
are hermitian and also generate the center of $U\big(so(3,1)\big)$. In the irreducible unitary representations defined above they act as
\[
\rho_{\ell_0\ell_1}(J_1) = (\ell_0^2+\ell_1^2-1)\mathrm{id}_{D_{\ell_0\ell_1}} \quad \mbox{ and } \quad \rho_{\ell_0\ell_1}(J_2) = i\ell_0\ell_1\mathrm{id}_{D_{\ell_0\ell_1}}.
\]
In this paper we will work with (in general) unbounded involutive representations of involutive complex Lie algebras $(\eufrak{g},*)$ or their universal envelopping algebras $U(\eufrak{g})$ acting on some pre-Hilbert space $D$. See \cite{nelson} for necessary and sufficient conditions for such a representationation to be the infinitesimal representation associated to a unitary representation $H=\overline{D}$ of the connected simply connected Lig group $G$ associated to the real Lie algebra $\eufrak{g}_\mathbb{R}=\{X\in\eufrak{g}; X^*=-X\}$.

\section{Sch\"urmann triples on $so(3,1)$ and their L\'evy processes}\label{sec-levy}

We start by recalling the definition of Sch\"urmann triples and L\'evy processes on real Lie algebras. Let $\eufrak{g}$ be a real Lie algebra, ($\eufrak{g}_\mathbb{C},*)$ its complexification equipped with the involution that makes the elements of $\eufrak{g}$ anti-hermitian, and $U_0(\eufrak{g})$ its universal enveloping algebra, without unit, but with the involution induced from $(\eufrak{g}_\mathbb{C},*)$

For $D$ a complex pre-Hilbert space with, we let
$\mathcal{L}(D)$ be algebra of linear operators on $D$ having an
adjoint defined everywhere on $D$, and $\mathcal{L}_{\rm AH}(D)$ the anti-Hermitian linear operators on $D$.

\begin{definition}
\label{def levy}
\cite[Definition 8.1.1]{franz+privault16}
Let $D$ be a pre-Hilbert space and $\omega\in D$ a unit vector. A family
\[
 \big(j_{st} :  \eufrak{g} \to \mathcal{L}_{\rm AH}(D)\big)_{0\le s\le t}
\]
 of representations of $\eufrak{g}$ is a \emph{L\'evy process on $\eufrak{g}$ over $(D,\omega)$} if
\begin{description}
\item[a)]
(increment property) we have
\[
j_{st}(X)+j_{tu}(X)=j_{su}(X)
\]
for all $0\le s\le t \le u$ and all $X\in\eufrak{g}$;
\item[b)]
(independence)
 we have
\[
[j_{st}(X),j_{s't'}(Y)]=0, \qquad X,Y\in\eufrak{g},
\]
 $0\le s\le t\le s'\le t'$, and
\begin{eqnarray*}
\langle\omega , j_{s_1t_1}(X_1)^{k_1}\cdots j_{s_nt_n}(X_n)^{k_n}\omega\rangle
 & = & \langle\omega , j_{s_1t_1}(X_1)^{k_1}\omega\rangle\cdots\langle\omega , j_{s_nt_n}(X_n)^{k_n}\omega \rangle,
\end{eqnarray*}
 for $n,k_1,\ldots,k_n\in\mathbb{N}$,
 $0\le s_1\le t_1\le s_2\le \cdots \le t_n$, $X_1,\ldots,X_n\in\eufrak{g}$;
\item[c)]
(stationarity)
for $n\in\mathbb{N}$, $X\in\eufrak{g}$, $0\le s\le t$,
\[
\langle\omega ,j_{st}(X)^n\omega\rangle = \langle\omega ,j_{0,t-s}(X)^n\omega\rangle
\]
i.e., the moments depend only on the difference $t-s$;
\item[d)]
(pointwise continuity)
we have
\[
 \lim_{t\searrow s} \langle\omega ,j_{st}(X)^n\omega\rangle =0,
 \qquad
 n\in\mathbb{N}, \quad
 X\in\eufrak{g}.
\]
\end{description}
\end{definition}
To a L\'evy process we can associate the states $\varphi_t=\langle\omega ,j_{0,t}(a)^n\omega\rangle$ for $a\in U_0(\eufrak{g})$, $t\ge 0$, and the functional
\[
L(a) = \left.\frac{{\rm d}}{{\rm d}t}\right|_{t=0} \varphi_t(a), \qquad a\in U_0(\eufrak{g}.
\]
The functional $L$ is a generating functional in the sense of the following definition: A linear functional $L:U_0(\eufrak{g})\to \mathbb{C}$ on the non-unital *-algebra $U_0(\eufrak{g})$ is called a \emph{generating functional on $\eufrak{g}$} if
\begin{description}
\item[a)]
$L$ is Hermitian, i.e., $L(u^*)=\overline{L(u)}$ for $u\in U_0(\eufrak{g})$;
\item[b)]
$L$ is positive, i.e., $L(u^*u)\ge 0$ for $u\in U_0(\eufrak{g})$.
\end{description}
\begin{definition}\label{def triple}
\cite[Definition 8.2.1]{franz+privault16}
Let $D$ be a pre-Hilbert space. A \emph{Sch\"urmann triple on $\eufrak{g}$ over $D$} is a triple $(\rho,\eta,\psi)$, where
\begin{description}
\item[a)]
$\rho : \eufrak{g}\to \mathcal{L}_{\rm AH}(D)$ is a representation of $\eufrak{g}$ on $D$, i.e.,
\[
\rho\big([X,Y]\big)=\rho(X)\rho(Y)-\rho(Y)\rho(X)
\]
for $X,Y\in\eufrak{g}$;
\item[b)]
$\eta : \eufrak{g}\to D$ is a $\rho$-$1$-cocycle, i.e.\ it satisfies
\[
\eta\big([X,Y])=\rho(X)\eta(Y)-\rho(X)\eta(Y), \qquad X,Y\in\eufrak{g};
\]
\item[c)]
$\psi : \eufrak{g}\to \mathbb{C}$ is a linear functional with imaginary values s.t.\ the bilinear map $(X,Y)\longmapsto \langle\eta(X),\eta(Y)\rangle$ is the 2-$\varepsilon$-$\varepsilon$-coboundary of $\psi$ (where $\varepsilon$ denotes the trivial representation), i.e.,
\[
\psi\big([X,Y]\big)=\langle\eta(Y),\eta(X)\rangle-\langle\eta(X),\eta(Y)\rangle,
 \qquad
  X,Y\in\eufrak{g}.
\]
\end{description}
\end{definition}
See \cite{guichardet80} for more information on the cohomology of Lie algebras and Lie groups.

The Sch\"urmann triple and in particular the linear functional $\psi$ in a Sch\"urmann triple have unique extensions to $\eufrak{g}_\mathbb{C}$ (by linearity) and to $U_0(\eufrak{g})$ (as representation, cocycle and coboundary, resp.) and the extension of the functional is a generating functional. Therefore it corresponds to a L\'evy process on $\eufrak{g}$ (which is unique in distribution). This L\'evy process can be realised on the symmetric Fock space
$\Gamma\big(L^2(\mathbb{R}_+,D)\big)=\bigoplus_{n=0}^\infty L^2(\mathbb{R}_+,D)^{\otimes_s n}$
over $L^2(\mathbb{R}_+,D)$ as
\[
j_{st}(X)=\Lambda_{st}\big(\rho(X)\big)+A^+_{st}\big(\eta(X)\big)+A^-_{st}\big(\eta(X^*)\big)+\psi(X)(t-s){\rm id},\qquad X\in\eufrak{g}_\mathbb{C},
\]
cf.\ \cite{schuermann93}. Here $\Lambda_{st}(M)=\Lambda(M\otimes \mathbf{1}_{[s,t]})$, $A^+_{st}(v)=A(v\otimes \mathbf{1}_{[s,t]})$, and $A^-_{st}(v)=A(v\otimes \mathbf{1}_{[s,t]})$, with $M\in\mathcal{L}(D)$, $v\in D$, denote the conservation, creation, and annihilation operators, see, e.g., \cite{franz+privault16}, Chapter 5].

If the cocycle $\eta$ is a coboundary, then the associated L\'evy process is unitarily equivalent to the second quantisation of $\rho$, cf.\ \cite[Proposition 8.2.7]{franz+privault16}, and the L\'evy processes of cocycles that differ only by a coboundary are also unitarily equivalent. Therefore it is most interesting to study the processes associated to non-trivial cocycles. The L\'evy processes associated to reducible Sch\"urmann triples can be constructed as tensor products of the L\'evy processes of their irreducible components, for this reason we shall study only irreducible representations.

If the Casimir invariants are invertible in some representation, then all 1-cocyles are coboundaries, cf.\ \cite[Lemma 2.2]{accardi+franz+skeide02}. Therefore the only non-trivial irreducible unitary representation that can have a non-trivial cocycle is $\rho_{10}$. Since $so(3,1)$ is simple, it is clear that the trivial representation has no non-zero cocycles at all, cf.\ \cite[Lemma 2.1]{accardi+franz+skeide02}.

From \cite{delorme77,hotta+wallach75}, we know that only one irreducible unitary representation of $SO(3,1)$ admits a non-trivial 1-cocycle, and this is $\rho_{10}$. We will describe a non-trivial 1-cocycle of this representation below, after recalling a useful lemma.

\begin{lemma} (Raabe-Duhamel test)
Let $(u_n)_{n\in\mathbb{N}}$ be a sequence of strictly positive real numbers such that
\[
\frac{u_{n+1}}{u_n} = 1 - \frac{\alpha}{n}+o\left(\frac{1}{n}\right).
\]
If $\alpha<1$, then the series $\sum_{n\in\mathbb{N}} u_n$ diverges, if $\alpha>1$, then the series $\sum_{n\in\mathbb{N}} u_n$ converges (nothing can be concluded in the case $\alpha=1$).
\end{lemma}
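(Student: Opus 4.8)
The plan is to prove this classical test by comparison with the $p$-series $\sum_n n^{-\beta}$, exploiting the fact that the ratio of consecutive terms of $n^{-\beta}$ has exactly the same kind of asymptotic expansion as the hypothesis imposes on $u_{n+1}/u_n$. Concretely, I would fix an auxiliary sequence $v_n=n^{-\beta}$, with $\beta$ a parameter to be chosen shortly, and record via the Taylor (binomial) expansion that
\[
\frac{v_{n+1}}{v_n}=\left(1+\frac{1}{n}\right)^{-\beta}=1-\frac{\beta}{n}+O\left(\frac{1}{n^2}\right).
\]
Here $\sum_n v_n$ converges exactly when $\beta>1$ and diverges when $\beta\le 1$, so $v_n$ is a tunable comparison sequence whose ratio is governed by the single parameter $\beta$.

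The key step is to convert a comparison of the two ratios into monotonicity of the quotient $w_n:=u_n/v_n$, using that all terms are strictly positive so that $w_{n+1}<w_n$ is equivalent to $u_{n+1}/u_n<v_{n+1}/v_n$. Subtracting the two expansions gives
\[
\frac{v_{n+1}}{v_n}-\frac{u_{n+1}}{u_n}=\frac{\alpha-\beta}{n}+o\left(\frac{1}{n}\right),
\]
where the $o(1/n)$ absorbs both the error in the hypothesis and the $O(1/n^2)$ from the comparison sequence. In the case $\alpha>1$ I would choose $\beta$ with $1<\beta<\alpha$: then the right-hand side is eventually positive, so $w_n$ is eventually decreasing, hence bounded, giving $u_n\le C v_n$ for large $n$; since $\beta>1$ the comparison test yields convergence of $\sum_n u_n$. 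In the case $\alpha<1$ I would instead choose $\beta$ with $\alpha<\beta<1$: now the right-hand side is eventually negative, so $w_n$ is eventually increasing and hence bounded below by a positive constant $c$, giving $u_n\ge c v_n$ for large $n$; since $\beta<1$ the series $\sum_n v_n$ diverges and comparison forces $\sum_n u_n$ to diverge as well.

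The only delicate point — and the reason the statement excludes $\alpha=1$ — is ensuring that the $o(1/n)$ remainder cannot overturn the sign of the linear term $(\alpha-\beta)/n$. This is precisely why $\beta$ is chosen \emph{strictly} separated from $\alpha$: the nonzero coefficient $\alpha-\beta$ dominates the $o(1/n)$ error for all sufficiently large $n$, pinning down the eventual monotonicity of $w_n$. When $\alpha=1$ no admissible $\beta$ exists (the borderline comparison $n^{-1}$ has a ratio expansion matching to the same order), and the remainder term can then tip the balance in either direction, which is exactly the inconclusive case. Everything else — the ratio expansion of $n^{-\beta}$, the convergence/divergence dichotomy of the $p$-series, and the comparison test — is routine.
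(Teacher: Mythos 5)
Your proof is correct, and there is in fact nothing in the paper to compare it against: the authors state the Raabe--Duhamel test as a recalled classical fact, with no proof given, before applying it to the divergence of $\sum_\ell |x_{\ell,0}|^2$ and in Proposition \ref{prop-decomp-so21}. Your argument is the standard one and is complete in all essentials: the expansion $\bigl(1+\tfrac{1}{n}\bigr)^{-\beta}=1-\tfrac{\beta}{n}+O\bigl(\tfrac{1}{n^2}\bigr)$ is valid for every real $\beta$ (which matters, since $\alpha$ may be negative in the divergence case, e.g.\ $\alpha=0$ as used in the paper), the equivalence of $w_{n+1}<w_n$ with the ratio comparison uses positivity correctly, and the strict separation $\beta\neq\alpha$ ensures the term $(\alpha-\beta)/n$ eventually dominates the $o(1/n)$ remainder, yielding eventual monotonicity of $w_n=u_n/v_n$ and hence the two-sided comparison $c\,v_n\le u_n$ or $u_n\le C\,v_n$ for large $n$. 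Your closing remark about why $\alpha=1$ is excluded is also accurate: the borderline series $\sum n^{-1}$ matches the ratio expansion to the stated order, so the remainder can decide the outcome either way (e.g.\ $u_n=1/(n\log^2 n)$ versus $u_n=1/(n\log n)$ both have $\alpha=1$). One cosmetic point: when you say $w_n$ is ``eventually decreasing, hence bounded,'' it is worth noting the bound is $w_n\le w_{n_0}$ only for $n\ge n_0$, which suffices since convergence is unaffected by finitely many terms; the same caveat applies to the lower bound in the divergence case.
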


We can now construct a non-trivial 1-cocycle for $\rho_{10}$.

\begin{proposition}
There exists a $1$-$\rho_{10}$-cocycle $c$ with $c(F_+)=\xi_{11}$. The cocycle $c$ is not a coboundary and every other non-trivial $1$-$\rho_{10}$-cocycle is a linear combination of $c$ and some $1$-$\rho_{10}$-coboundary.
\end{proposition}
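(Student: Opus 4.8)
The plan is to build $c$ generator by generator, exploiting the weight grading coming from $\rho_{10}(H_3)$ together with the splitting of $so(3,1)$ into the rotation subalgebra $so(3)=\mathrm{span}\{H_3,H_\pm\}$ and the boost part $\mathfrak{p}=\mathrm{span}\{F_3,F_\pm\}$. First I would feed $H_3$ into the cocycle relation. Since $[H_3,X]$ has the same $H_3$-weight as $X$ and $\rho_{10}(H_3)$ acts on $\xi_{\ell m}$ by $m$, the identity $c([H_3,X])=\rho_{10}(H_3)c(X)-\rho_{10}(X)c(H_3)$ forces $c(X)$ to sit in the weight space $\overline{\mathrm{span}}\{\xi_{\ell w}\}$, where $w$ is the weight of $X$; applied to $[H_3,H_+]=H_+$ this gives $\rho_{10}(H_+)c(H_3)=0$, hence $c(H_3)=0$. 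Because $so(3)$ is semisimple and $H_{10}=\bigoplus_{\ell\ge1}V_\ell$ splits into finite-dimensional nontrivial $so(3)$-modules $V_\ell=\mathrm{span}\{\xi_{\ell m}\}$, one has $H^1\big(so(3),H_{10}\big)=0$, so modulo a coboundary I may assume $c$ vanishes on all of $so(3)$; the $so(3)$-Casimir acting on $V_\ell$ by $\ell(\ell+1)$ yields the bound $\|w\|^2\le\tfrac12\sum_i\|c(H_i)\|^2$ on the trivialising potential $w$, which keeps it inside $H_{10}$.

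With $c|_{so(3)}=0$ the mixed relations $[so(3),\mathfrak{p}]\subseteq\mathfrak{p}$ say precisely that $c|_{\mathfrak{p}}$ is an $so(3)$-module map from $\mathfrak{p}$, which is the spin-$1$ module, into $H_{10}$. By Schur's lemma its image lies in the unique spin-$1$ block $V_1=\mathrm{span}\{\xi_{1,-1},\xi_{10},\xi_{11}\}$, so up to one scalar $c(F_+)=\xi_{11}$, and the remaining values are obtained by lowering, $c(F_3)\propto\rho_{10}(H_-)c(F_+)\propto\xi_{10}$ and $c(F_-)\propto\rho_{10}(H_-)c(F_3)\propto\xi_{1,-1}$. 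This determines $c$ completely once I normalise by $c(F_+)=\xi_{11}$.

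The hard part will be the consistency of the resulting overdetermined system, namely the relations coming from $[\mathfrak{p},\mathfrak{p}]\subseteq so(3)$. Here $c$ of a bracket lying in $so(3)$ must vanish, whereas the right-hand side $\rho_{10}(F)c(F')-\rho_{10}(F')c(F)$ a priori has a component in the $\ell=2$ block (the operators $\rho_{10}(F_\pm),\rho_{10}(F_3)$ raise $\ell$ by one through the coefficients $C_\ell$). I expect this to be the crux: one must check that these $\ell=2$ contributions cancel, and this is exactly where the special value $\ell_0=1,\ell_1=0$ is used, equivalently the vanishing of both Casimir invariants $\rho_{10}(J_1)=\rho_{10}(J_2)=0$. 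For generic $(\ell_0,\ell_1)$ the cancellation fails, in agreement with the stated fact that invertible Casimirs annihilate all cocycles.

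Finally I would treat non-triviality and uniqueness. If $c=\partial v$ were a coboundary, then $c|_{so(3)}=0$ would force $\rho_{10}(X)v=0$ for all $X\in so(3)$, i.e.\ $v$ would be an $so(3)$-invariant vector of $H_{10}$; as every summand has $\ell\ge1$ there is none, so $v=0$, contradicting $c(F_+)\neq0$. For uniqueness, any cocycle reduces as above to one vanishing on $so(3)$, and the highest-weight constraint $\rho_{10}(H_+)c(F_+)=0$ read off from $[H_+,F_+]=0$ confines $c(F_+)$ to $V_1$, so the reduced cocycle is a scalar multiple of the one constructed; hence $\dim H^1\big(so(3,1),\rho_{10}\big)=1$ and every cocycle is a multiple of $c$ plus a coboundary, as quoted from Delorme and Hotta--Wallach. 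The Raabe--Duhamel test enters precisely when one builds $c$ directly from the recursions without first trivialising on $so(3)$: the constraint $\rho_{10}(F_+)c(H_+)=0$ is a two-term recursion for the coefficients of $c(H_+)$ whose non-zero (even/odd) subsequences $u_k=|(c(H_+))_k|^2$ obey $u_{k+1}/u_{k-1}=1+4/k+o(1/k)$, so Raabe--Duhamel gives $\sum_k u_k=\infty$; this shows that every solution other than the finite one above leaves $H_{10}$, confirming that $c$ is the unique $H_{10}$-valued cocycle with $c(F_+)=\xi_{11}$ and that its formal primitive is not normalisable.
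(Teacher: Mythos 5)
Your proposal is correct in outline and arrives at the same cocycle, but by a genuinely different route. The paper simply writes down the values $c(H_3)=c(H_\pm)=0$, $c(F_3)=-\frac{1}{\sqrt2}\xi_{10}$, $c(F_\pm)=\pm\xi_{1,\pm1}$ and asserts that ``one checks'' the cocycle identity; it proves non-triviality by a hands-on series argument (a putative preimage $\zeta$ with $\rho_{10}(F_+)\zeta=\xi_{11}$ has coefficients obeying a two-term recurrence with ratio $1+o(1/\ell)$, so Raabe--Duhamel forces $\sum|x_{\ell,0}|^2=\infty$ and $\zeta\notin H_{10}$); and it gets uniqueness purely by citing Delorme and Hotta--Wallach. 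You instead derive the \emph{shape} of the cocycle structurally: Whitehead's lemma $H^1\big(so(3),V_\ell\big)=0$ (with a norm bound to sum over $\ell$) reduces to $c|_{so(3)}=0$, and Schur's lemma on $\mathrm{Hom}_{so(3)}\big(\mathfrak{p},\bigoplus_{\ell\ge1}V_\ell\big)\cong\mathbb{C}$ pins $c|_{\mathfrak{p}}$ into $V_1$ up to one scalar. Your non-coboundary argument --- if $c=\partial v$ then $c|_{so(3)}=0$ makes $v$ an $so(3)$-invariant vector, and $H_{10}$ has none since every block has $\ell\ge1$ --- is cleaner than the paper's Raabe--Duhamel computation and avoids it entirely. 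Your framework even buys a shortcut the paper does not exploit: the ``overdetermined'' check on $[\mathfrak{p},\mathfrak{p}]$ reduces by equivariance to finitely many scalars, because $(F,F')\mapsto\rho_{10}(F)c(F')-\rho_{10}(F')c(F)$ is an $so(3)$-map from $\Lambda^2\mathfrak{p}\cong V_1$ into $H_{10}$.

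Three corrections, none fatal. First, ``hence $c(H_3)=0$'' is not a valid deduction for an arbitrary cocycle: a coboundary $\partial v$ generally has $\partial v(H_3)=\rho_{10}(H_3)v\neq0$, and your weight-confinement claim presupposes $c(H_3)=0$. The relations only force the weight-zero component of $c(H_3)$ to be $so(3)$-invariant, hence zero; the rest is removed by the coboundary reduction you invoke anyway, so reorder the argument. Second, your crux is slightly misplaced: the $\ell=2$ contributions in the $[\mathfrak{p},\mathfrak{p}]$ relations cancel \emph{automatically} (by your own Schur argument, since $\mathrm{Hom}_{so(3)}(V_1,V_2)=0$, or directly: $\rho_{10}(F_+)c(F_-)-\rho_{10}(F_-)c(F_+)=-\sqrt2\,C_2\xi_{20}+\sqrt2\,C_2\xi_{20}=0$); the place where $(\ell_0,\ell_1)=(1,0)$ genuinely enters is the diagonal $\ell=1$ term, governed by $A_1=i\ell_0\ell_1/2$, which vanishes precisely because $\ell_1=0$. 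Relatedly, ``for generic $(\ell_0,\ell_1)$ the cancellation fails'' is wrong for the $\ell_0=0$ series: there the analogous $V_1$-valued cocycle \emph{does} exist --- it is the coboundary $\partial\xi_{00}$ of the paper's own Remark --- and an invertible Casimir implies cocycles are coboundaries, not that they vanish; failure occurs within the $\ell_0=1$ family for $\ell_1\neq0$, where no $so(3)$-invariant vector is available. Third, in your uniqueness reduction the trivialising vector $w$ lies a priori only in the completion $H_{10}$, and to subtract $\partial w$ as a cocycle on all of $so(3,1)$ you must also check that $w$ lies in the domain of the operators $\rho_{10}(F_j)$, which your norm bound does not settle; like the paper, you ultimately (and legitimately) lean on Delorme and Hotta--Wallach for $\dim H^1(\rho_{10})=1$, so this gap does not affect the Proposition as stated.
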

\begin{proof}
The cocycle $c$ can not be a coboundary, since the vector $\xi_{11}$ is not in the image of $\rho_{10}(F_+)$. Indeed, assume there exists a vector $\zeta=\sum x_{\ell m} \xi_{\ell m}\in H_{10}$ s.t.\ $\rho_{10}(F_+)\zeta=\xi_{11}$. Then we have $x_{\ell m}=0$ for all $m\not=0$ and for pairs with $m=0$ and $\ell$ odd. For $m=0$ and even $\ell$ we find the recurrence relation
\[
x_{\ell,0} = -\frac{C_{\ell-1}}{C_\ell}x_{\ell-2,0}
\]
with the initial condition $x_{20}=-i\sqrt{\frac{5}{2}}$. We have
\begin{eqnarray*}
\frac{|x_{\ell,0}|^2}{|x_{\ell-2,0}|^2} &=& \frac{\big((\ell-1)^2-1\big)(4\ell^2-1)}{(\ell^2-1)\big(4(\ell-1)^2-1\big)} = \frac{(\ell^2-2\ell)(4\ell^2-1)}{(\ell^2-1)(4\ell^2-8\ell+3)} \\
&=& \frac{4\ell^4-8\ell^3-\ell^2+2\ell}{4\ell^4-8\ell^3-\ell^2+8\ell+3} = 1 + o\left(\frac{1}{\ell}\right)
\end{eqnarray*}
i.e., $\alpha=0<1$. So the Raabe-Duhamel test implies that the series $\sum |x_{\ell m}|^2$ diverges and therefore there exists no such vector $\zeta$ in $H_{10}$.

It was shown in \cite{delorme77,hotta+wallach75} that the first cohomology group of $\rho_{10}$ has dimension one, this implies the uniqueness. To prove existence, one checks that
\begin{gather*}
c(H_3) = c(H_\pm) = 0 \\
c(F_3) = -\frac{1}{\sqrt{2}} \xi_{10}, \quad c(F_\pm) = \pm \xi_{1,\pm1}
\end{gather*}
defines indeed a $1$-$\rho_{10}$-cocycle.
\end{proof}

\begin{remark}
Let $\ell_0=0$. The $1$-$\rho_{0,\ell_1}$-coboundary of $\xi_{00}$ is given by
\begin{gather*}
\partial \xi_{00} (H_3) = 0 = \partial \xi_{00}(H_\pm), \\
\partial \xi_{00}(F_3) = -iC_1(0,\ell_1)\xi_{10}, \quad \partial \xi_{00}(F_\pm) = \pm i C_1(0,\ell_1)\sqrt{2}\xi_{1,\pm1},
\end{gather*}
with $C_1(0,\ell_1)= \frac{\sqrt{1-\ell_1^2}}{\sqrt{3}}$.
We see that $c$ is formally the limit of the $1$-$\rho_{0,\ell_1}$-coboundaries $\frac{1}{i C_1(0,\ell_1)\sqrt{2}}\partial \xi_{00}$ as $\ell_1 $ tends to $1$.
\end{remark}

\begin{proposition}\label{prop c}
There exists a unique Sch\"urmann triple $(\rho_{10},c,\psi)$ containing the representation $\rho_{10}$ and the cocycle $c$.
\end{proposition}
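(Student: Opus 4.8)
The plan is to observe that, with $\rho_{10}$ and the cocycle $c$ already fixed, Definition~\ref{def triple} leaves only the functional $\psi$ to be pinned down, and that condition (c) is linear in $\psi$. Setting
\[
\sigma(X,Y) := \langle c(Y),c(X)\rangle - \langle c(X),c(Y)\rangle = -2i\,\mathrm{Im}\,\langle c(X),c(Y)\rangle,
\]
which is an antisymmetric, purely imaginary $\mathbb{R}$-bilinear form on $so(3,1)$, the whole statement reduces to the existence and uniqueness of an imaginary-valued linear functional $\psi$ with $\psi([X,Y])=\sigma(X,Y)$ for all $X,Y$.

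Uniqueness comes for free from the structure of the algebra. Since $so(3,1)\cong sl(2,\mathbb{C})$ is simple, it is perfect, i.e.\ $[so(3,1),so(3,1)]=so(3,1)$, so every element is a linear combination of commutators. Hence condition (c) determines $\psi$ on a spanning set, and any two admissible functionals must coincide.

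For existence I would first record the general principle that $\sigma$ is automatically a Chevalley--Eilenberg $2$-cocycle with trivial coefficients: inserting the cocycle identity $c([X,Y])=\rho_{10}(X)c(Y)-\rho_{10}(Y)c(X)$ into the cyclic sum $\sum_{\mathrm{cyc}}\sigma([X,Y],Z)$ and using $\rho_{10}(X)^*=-\rho_{10}(X)$ for $X$ in the real Lie algebra, each group of terms carrying a fixed $\rho_{10}(\cdot)$ turns out to be real; the cyclic sum is therefore real, equals its own conjugate, and so $\sum_{\mathrm{cyc}}\sigma([X,Y],Z)=0$. Because $so(3,1)$ is simple, $H^2(so(3,1),\mathbb{C})=0$ by Whitehead's second lemma, so $\sigma$ is a coboundary and an admissible $\psi$ exists. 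For the cocycle $c$ at hand this becomes completely explicit: rewriting $c(F_1),c(F_2),c(F_3)$ through $c(F_\pm)=\pm\xi_{1,\pm1}$ and $c(F_3)=-\tfrac{1}{\sqrt2}\xi_{10}$, and using that $\{\xi_{10},\xi_{11},\xi_{1,-1}\}$ is orthonormal, one checks that the Gram matrix $\big(\langle c(F_j),c(F_k)\rangle\big)_{1\le j,k\le3}$ equals $\tfrac12\,\mathrm{id}$, hence is real and symmetric. Since $c$ vanishes on $H_3,H_\pm$, this forces $\mathrm{Im}\,\langle c(X),c(Y)\rangle=0$ and therefore $\sigma\equiv0$. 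Thus $\psi=0$ satisfies condition (c), and by the uniqueness above it is the unique solution; the Schürmann triple is $(\rho_{10},c,0)$.

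The only conceivable obstruction is that the coboundary condition imposed by the various structure relations of $so(3,1)$ might be mutually inconsistent, i.e.\ that $\sigma$ could be a nonzero cocycle that is not a coboundary. I expect this to be the single point needing care, and it is dispatched on two levels: abstractly by simplicity (Whitehead), and concretely by the vanishing of the imaginary part of the Gram matrix, which makes $\sigma$ itself vanish. The short evaluation of that Gram matrix is the only genuine computation in the argument.
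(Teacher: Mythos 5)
Your proposal is correct and follows essentially the same route as the paper: uniqueness from perfectness of the simple Lie algebra $so(3,1)$ (every element is a commutator, so condition (c) of Definition~\ref{def triple} pins down $\psi$), and existence from the vanishing of the second cohomology of the trivial representation (Whitehead), culminating in $\psi\equiv 0$. Your explicit Gram-matrix computation $\bigl(\langle c(F_j),c(F_k)\rangle\bigr)=\tfrac12\,\mathrm{id}$ is a nice refinement that makes the Whitehead step logically redundant and proves $\psi\equiv0$ directly, where the paper only exhibits the sample evaluation $2\psi(F_3)=\psi\bigl([F_+,H_-]\bigr)=0$ and asserts the rest.
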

\begin{proof}
This is a consequence of the fact that $so(3,1)$ is simple and that therefore the second cohomology group of the trivial representation is trivial. Any element in $so(3,1)$ can be written as a commutator and so Condition c) in Definition \ref{def triple} allows to deduce the values of $\psi$ from the values of $c$. We have, e.g.,
\[
2\psi(F_3) = \psi\big([F_+,H_-]\big) = \Big\langle c\big((F^+)^*\big),c(H_-)\Big\rangle - \Big\langle c\big((H^-)^*\big),c(F_+)\Big\rangle = 0.
\]
It turns out that $\psi$ is identically equal to $0$ on $su(3,1)$.
\end{proof}

We would like to characterise the L\'evy process associated to the Sch\"urmann triple $(\rho_{10},c,\psi)$. For this purpose one could compute the action of the Casimir invariants on the vacuum vector $\Omega\in \Gamma\big(L^2(\mathbb{R}_+,D)\big)$.

Since $\psi$ vanishes on $\eufrak{g}$, we have a simple formula for the action of Lie algebra elements on the vacuum vector,
\[
j_{st}(X)\Omega = c(X)\otimes\mathbf{1}_{[s,t]}, \qquad X\in\eufrak{g}_\mathbb{C},\quad 0\le s\le t,
\]
so we have
\begin{gather*}
j_{st}(H_3)\Omega = 0 = j_{st}(H_\pm)\Omega, \\
j_{st}(F_3)\Omega = -\frac{1}{\sqrt{2}} \xi_{10}\otimes \mathbf{1}_{[s,t]}, \quad j_{st}(F_\pm)\Omega = \pm \xi_{1,\pm1}\otimes\mathbf{1}_{[s,t]}.
\end{gather*}
For the Casimir invariants we get
\begin{gather*}
j_{st}(J_2)\Omega = j_{st}(H_+) \big(- \xi_{1,-1}\otimes \mathbf{1}_{[s,t]}\big) + j_{st}(H_-) \big(\xi_{1,1}\otimes \mathbf{1}_{[s,t]}\big) = 0.
\end{gather*}
and
\begin{gather*}
j_{st}(J_1) \Omega = -2 j_{st}(F_3) \left(-\frac{1}{\sqrt{2}}\xi_{10}\otimes \mathbf{1}_{[s,t]}\right)
- j_{st}(F_+)\big(-\xi_{1,-1}\otimes\mathbf{1}_{[s,t]}\big) \\
-j_{st}(F_-)\big(\xi_{11}\otimes \mathbf{1}_{[s,t]}\big) \\
= -(t-s) \Omega + \sqrt{2} \big(\xi_{1,1}\otimes\mathbf{1}_{[s,t]}\big)\otimes \big(\xi_{1,-1}\otimes\mathbf{1}_{[s,t]}\big) \\
+\sqrt{2} \big(\xi_{1,-1}\otimes\mathbf{1}_{[s,t]}\big) \otimes \big(\xi_{1,1}\otimes \mathbf{1}_{[s,t]}\big) \\
- \sqrt{2} \big(\xi_{10}\otimes \mathbf{1}_{[s,t]}\big) \otimes \big(\xi_{10}\otimes \mathbf{1}_{[s,t]}\big).
\end{gather*}.

The action of $j_{st}(J_1)$ on the vacuum vector shows that $j_{st}(J_1)$ is not a multiple of the identity, which implies that the representatoin $j_{st}$ restricted to the subspace generated from the vacuum vector can not be irreducible.

To get a better understanding of the representations $j_{st}$, $0\le s\le t$, we will now consider the restrictions of $\rho_{10}$ to Lie subalgebras of $so(3,1)$.

\section{Restriction to the Lie sub algebra $so(3)$}\label{sec-restr-so3}

The basis we used to describe the representations of $so(3,1)$ is already adapted to the subalgebra $so(3)={\rm span}\{H_3,H_+,H_-\}$, so it is easy to decompose the restriction of representations of $so(3,1)$ to its Lie subalgebra $so(3)$ into its irreducible parts.

The representation $\rho_{10}$ restricted to $so(3)={\rm span}\{H_3,H_+,H_+\}$ decomposes into a direct sum of finite-dimensional irreducible representations. Recall that the irreducible representations of $so(3)$ are all unitarily equivalent to one of the following. Let $s\in\frac{1}{2}\mathbb{Z}$, set $E_s={\rm span}\{e_{-s}, e_{-s+1},\ldots, e_s\}$, where $e_{-s},\ldots,e_s$ form an orthonormal basis, and set
\[
\pi_s(H_3)e_m = m e_m,\quad \pi_s(H_{\pm})e_m= \sqrt{(s\mp m)(s\pm s+1)}e_{m\pm 1},
\]
for $m=-s,\ldots,s$. It is not difficult to check that we have
\[
(D_{10},\rho_{10}|_{so(3)})\cong \bigoplus_{s=3}^\infty (E_s,\pi_s).
\]

\section{Restriction to Lie sub algebra $so(2,1)$}\label{sec-restr-so21}

The basis elements $H_3,F_+,F_-$ span a Lie sub algebra of $so(3,1)$ that is isomorphic to the non-compact form $sl(2;\mathbb{R})\cong su(1,1)\cong so(2,1)$ of the three-dimensional simple Lie algebra $sl(2)$. We will now describe the restriction of our L\'evy processes on $so(3,1)$ to this Lie sub algebra.

Recall that $so(2,1)$ admits the highest and lowest weight representations $\pi^+_t$ and $\pi^-_t$, with $t>0$, acting on $D^\pm_t={\rm span}\{e_n;n\in\mathbb{N}\}$ (where $(e_n)$ are an orthonormal basis) as
\begin{gather*}
\pi^\pm_t (H_3) e_n =  \pm (n+t)e_n, \\
\pi^+_t (F_+) e_n = \sqrt{(n+1)(n+2t)} e_{n+1}, \quad \pi^-_t(F_+) e_n = \sqrt{n(n+2t-1)} e_{n-1} \\
\pi^+_t (F_-) e_n = \sqrt{n(n+2t-1)} e_{n-1}, \quad \pi^-_t(F_-) e_n = \sqrt{(n+1)(n+2t)} e_{n+1},
\end{gather*}
cf.\ \cite{accardi+franz+skeide02}. There is also a third family $\pi_{c,\mu}$ acting on $D^0_{c,\mu}={\rm span}\{f_n,n\in\mathbb{Z}\}$ (where $(f_n)$ is an orthonormal basis) as
\begin{eqnarray*}
\pi_{c,\mu}(H_3) f_n &=&  (n-\mu) f_n, \\
\pi_{c,\mu}(F_+) f_n &=& \sqrt{n^2+(1-2\mu)n + \mu(\mu-1) - c} f_{n+1}, \\
\pi_{c,\mu}(F_-) f_n &=& \sqrt{(n-1)^2+(1-2\mu)(n-1) +\mu(\mu-1)-c }f_{n-1},
\end{eqnarray*}
with $0\le\mu<1$ and $c<\mu(\mu-1)$.

The families $\pi^+_t$ and $\pi^-_t$ are called the positive and the negative discrete series. Our third family contains both the principal unitary series and the complementary unitary series. See, e.g., \cite{vilenkin+klimyk91}, Section 6.4] for more information on the representation theory of $SU(1,1)$.

Denote by $K=H_3^2-\frac{1}{2}(F_+F_-+F_-F_+)= H_3(H_3-1) -  F_-F_+$ the Casimir element of $so(2,1)$. Then we have
\[
\pi^\pm_t(K) = 2t(t - 1)\,{\rm id} \qquad \mbox{ and }\qquad \pi_{c,\mu}(K)=2c\,{\rm id}.
\]

The subrepresentations $\pi^+_t$ and $\pi^-_t$ can be detected by their cyclic vector $e_0$ which is characterised by the equations
\begin{gather*}
\pi^+_t(F_-)e_0=0,\, \pi^+_t(H_3)e_0=te_0 \\
(\mbox{or } \pi^-_t(F_+)e_0=0,\,\pi^+_t(H_3)e_0=-te_0\, \mbox{ resp.}).
\end{gather*}

\begin{proposition}\label{prop-decomp-so21}
If we restrict the representation $\rho_{10}$ of $so(3,1)$ to the Lie subalgebra $so(2,1)$, then it decomposes as
\[
(D_{10},\rho_{10}|_{so(2,1)})\cong (D^+_1,\pi^+_1)\oplus (D^-_1,\pi^-_1)\oplus (D_R,\pi_R)
\]
where the "rest" $(\pi_R,D_R)$ is a direct sum of unitary irreducible representations $(\pi_{c,0},D^0_{c,0})$ belonging to the third family.
\end{proposition}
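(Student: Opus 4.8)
The plan is to peel off the two discrete-series summands by an explicit lowest-weight computation and then to identify the remainder by diagonalising the $so(2,1)$-Casimir $K$ on a single weight space. I first specialise the structure constants to $\ell_0=1$, $\ell_1=0$, so that $A_\ell=0$ and $C_\ell=i\sqrt{(\ell^2-1)/(4\ell^2-1)}$ with $C_1=0$, and write out $\rho_{10}(H_3)$ and $\rho_{10}(F_\pm)$ on the $\xi_{\ell m}$. Two remarks organise the argument. Since $\rho_{10}(F_\pm)$ sends $\xi_{\ell m}$ to a combination of $\xi_{\ell\pm1,m\pm1}$, the parity of $\ell+m$ is preserved and $D_{10}=D_{\mathrm{ev}}\oplus D_{\mathrm{odd}}$ as $so(2,1)$-modules; and the $H_3$-spectrum is exactly $\mathbb{Z}$, which forces $t\in\mathbb{Z}_{>0}$ in any discrete-series constituent and $\mu=0$ in any third-family constituent, already accounting for the index $0$ in the statement.

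To locate the discrete series I look for lowest-weight vectors. A copy of $\pi^+_t$ needs $v=\sum_{\ell\ge t}a_\ell\xi_{\ell t}$ in the weight-$t$ space with $\rho_{10}(F_-)v=0$; reading off the coefficient of $\xi_{k,t-1}$ gives the two-step recurrence $a_{k+1}C_{k+1}\sqrt{(k+t+1)(k+t)}+a_{k-1}C_k\sqrt{(k-t)(k-t+1)}=0$. For $t\ge2$ the equation at the smallest admissible index $k=t-1$ forces $a_t=0$, and the vanishing of $\sqrt{(k-t)(k-t+1)}$ at $k=t$ then forces $a_{t+1}=0$; the recurrence propagates these zeros, so $v=0$ and no discrete series of weight $\ge2$ occurs. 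For $t=1$ this obstruction is absent and there is a one-dimensional space of formal solutions supported on odd $\ell$; putting $u_j=|a_{2j-1}|^2$ one finds $u_{j+1}/u_j=1-\tfrac2j+o(1/j)$, so the Raabe--Duhamel test with $\alpha=2>1$ gives $\sum_j u_j<\infty$ and a genuine normalisable lowest-weight vector $v_1\in H_{10}$, generating $(D^+_1,\pi^+_1)$. The weight-reflection automorphism of $so(2,1)$ ($H_3\mapsto-H_3$, $F_+\leftrightarrow F_-$), implemented on $\rho_{10}$ by $\xi_{\ell m}\mapsto\xi_{\ell,-m}$ up to sign, then yields the highest-weight vector generating $(D^-_1,\pi^-_1)$. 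This step is the exact analogue of, and is governed by the same convergence test as, the non-coboundary computation for the cocycle $c$.

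It remains to decompose $D_R=(D^+_1\oplus D^-_1)^\perp$. The decisive simplification is that $\pi^\pm_1$ carry no weight-$0$ vector, so the weight-$0$ space $V_0=\mathrm{span}\{\xi_{\ell0};\ell\ge1\}$ lies entirely inside $D_R$; and a third-family representation $\pi_{c,0}$ is detected by, and reconstructed from, its one-dimensional weight-$0$ subspace, namely the cyclic vector $f_0$ on which $K$ acts by the scalar $c$. Hence decomposing $D_R$ reduces to the spectral analysis of the self-adjoint operator $K=H_3^2-\tfrac12(F_+F_-+F_-F_+)$ restricted to $V_0$: the spectral measure returns the parameters $c$ with their multiplicities, and applying $\rho_{10}(F_\pm)$ to the corresponding weight-$0$ vectors rebuilds the full copies $(D^0_{c,0},\pi_{c,0})$.

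The hard part is this last analysis. On $V_0$, further split by $\ell$-parity, $K$ is a Jacobi operator coupling $\xi_{\ell0}$ to $\xi_{\ell\pm2,0}$, and a direct computation gives diagonal entries $\sim-\ell^2/2$ and off-diagonal entries $\sim-\ell^2/4$; thus the operator sits at the \emph{critical} ratio where the leading transfer matrix has the double eigenvalue $-1$, and the spectral type is decided only at the next order. Substituting $\psi_\ell\sim(-1)^{\ell/2}\ell^\gamma$ into $K\psi=c\psi$ produces the indicial relation $\gamma^2+\gamma=c$, i.e.\ $\gamma=\tfrac12(-1\pm\sqrt{1+4c})$: for $c<-\tfrac14$ both solutions oscillate with $|\psi_\ell|\sim\ell^{-1/2}$ (neither square-summable), the signature of absolutely continuous spectrum filling $(-\infty,-\tfrac14]$ and producing the principal series, while for $-\tfrac14<c<0$ one solution is square-summable and the boundary condition at the lowest index selects a discrete set of eigenvalues giving complementary-series summands. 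I would make this rigorous by recognising the recurrence as that of a Wilson/continuous-Hahn family --- the coefficients factor through $(\ell^2-\ell_0^2)(\ell^2-\ell_1^2)$, the hallmark of such polynomials --- and reading the decomposition off its orthogonality measure. The subtlety to flag is precisely that the principal part is recovered as a direct integral, so \emph{direct sum} in the statement must be understood in that generalised sense; pinning down the critical asymptotics, and hence the exact atoms and continuous part, is the main obstacle.
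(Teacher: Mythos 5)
Your identification of the discrete constituents is correct and is essentially the paper's own argument: the paper also looks for weight vectors annihilated by $\rho_{10}(F_-)$ (resp.\ $\rho_{10}(F_+)$), reduces to a two-step recurrence on a single weight space, and decides square-summability with the Raabe--Duhamel test; your ratio $u_{j+1}/u_j=1-\tfrac2j+o(1/j)$ agrees with the paper's $1-\tfrac{4m_0}{\ell}+o(1/\ell)$ at $m_0=1$, and your algebraic exclusion of $t\ge 2$ (the boundary equations force $a_t=a_{t+1}=0$ and the recurrence propagates the zeros) is in fact tidier than the paper's. One omission: you never rule out a \emph{trivial} $so(2,1)$-subrepresentation, i.e.\ a weight-$0$ vector killed by $F_\pm$; the paper covers this by running the same test at $m_0=0$, where $\alpha=0<1$ and the formal solution diverges.

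Where you genuinely depart from the paper is the treatment of the ``rest'', and here the comparison cuts both ways. The paper's entire argument for that step is one sentence: no highest or lowest weight vectors remain and the $H_3$-spectrum is $\mathbb{Z}$, hence the remaining subrepresentations are of the form $\pi_{c,0}$. That sentence tacitly assumes the restriction decomposes \emph{discretely}; your spectral analysis of $K$ on the weight-$0$ space is precisely an attempt to check this, and your suspicion that it fails is correct. At the group level, the subgroup of $SO(3,1)^+\cong PSL(2,\mathbb{C})$ corresponding to $so(2,1)$ acts on $\mathbb{CP}^1$ with two open orbits (the unit disk and its exterior), each with compact stabiliser $U(1)$; since $\rho_{10}$ is a principal-series representation realised on $L^2$-sections of a line bundle over $\mathbb{CP}^1$, its restriction is a sum of two representations induced from characters of $U(1)$, and by the Plancherel theorem for $SU(1,1)$ each of these decomposes as finitely many discrete-series atoms plus an \emph{absolutely continuous direct integral} of principal series $\pi_{c,0}$, $c\le-\tfrac14$, with no complementary series and no atoms in the continuous part. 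So the ``rest'' is a multiplicity-two direct integral (matching your even/odd parity splitting of $V_0$) containing no irreducible subrepresentation whatsoever; in particular the complementary-series summands your boundary-condition heuristic suggests do not actually occur. The upshot: the gap you flag at the end is not a defect of your route relative to the paper's --- it is a gap in the paper's own proof, and, read literally, in the statement itself, which can only hold with ``direct sum'' understood as direct integral. Judged as a proof of that corrected statement, your approach (diagonalising the critical Jacobi operator $K|_{V_0}$, whose indicial equation $\gamma^2+\gamma=c$ you compute correctly) is the right one but is left incomplete --- the asymptotic analysis is heuristic and the absence of atoms is not settled --- whereas the Mackey--Plancherel argument above closes all of these gaps at once.
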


\begin{proof}
We need to determine all eigenvectors of $\rho_{10}(H_3)$ that are annihilated by $\rho_{10}(F_-)$. A non-zero vector $\xi=\sum_{\ell=1}^\infty\sum_{m=-\ell}^\ell x_{\ell m}\xi_{\ell m}$ is an eigenvector of $\rho_{10}(H_3)$, if and only if there exists an integer $m_0\in\mathbb{Z}$ s.t.\ $x_{\ell m}=0$ for $m\not=m_0$. And this integer is then its eigenvalue.

Let us now study, when such a vector is annihilated by $\rho_{10}(F_-)$. We consider first $m_0\in\{-1,0,1\}$. For $\xi=\sum_{\ell=1}^\infty x_{\ell}\xi_{\ell m_0}\in D_{10}$ we have
\begin{gather}\label{eq-condf-}
\rho_{10}(F_-)\xi = - \sum_{\ell =2}^\infty \left(x_{\ell+1}C_{\ell+1}\sqrt{(\ell+m_0+1)(\ell+m_0)}\right. \\
\qquad +\left. x_{\ell-1}C_\ell\sqrt{(\ell-m_0)(\ell-m_0+1)}\right)\xi_{\ell m_0},
\end{gather}
If this vector vanishes, then the coefficients $x_{2\ell+1}$, $\ell\in\mathbb{N}$, are determined by $x_1$ via the recurrence relation
\begin{equation}\label{eq-recurr}
x_{\ell+1}=-\frac{C_\ell \sqrt{(\ell-m_0)(\ell-m_0+1)}}{C_{\ell+1}\sqrt{(\ell+m_0+1)(\ell+m_0)}} x_{\ell-1}.
\end{equation}
We get
\begin{eqnarray*}
\frac{|x_{\ell+1}|}{|x_{\ell-1}|} &=& \frac{(\ell^2-1)\big(4(\ell+1)^2-1\big)(\ell-m_0)(\ell-m_0+1)}{(4\ell^2-1)\big((\ell+1)^2-1\big)(\ell+m_0+1)(\ell+m_0)} \\
&=& 1-\frac{8m_0}{2\ell} + o\left(\frac{1}{\ell}\right)
\end{eqnarray*}
and the Raabe-Duhamel test shows that for $x_1\not=0$ this series can only converge if $m_0=1$.

Furthermore, in the case $m_0=1$ we get $x_2=0$ from the coefficient of $\xi_{10}$ in \eqref{eq-condf-}, and then $x_{2\ell}=0$ for all $\ell>1$ from the recurrrence relations \eqref{eq-recurr}.

We set $x_1=1$, $x_2=0$ and let $(x_\ell)_{\ell\ge 1}$ denote the solution of the recurrence relation \eqref{eq-recurr}. Then $\xi^+=\sum_{\ell=1}^\infty x_\ell \xi_{\ell 1}$ is a non-zero vector s.t.
\[
\rho_{10}(H_3)\xi^+=\xi^+,\quad \rho_{10}(F_-)\xi^+=0,
\]
it is therefore the cyclic vector of a subrespresentation of $(D_{10},\rho_{10}|_{so(2,1)})$ that is unitarily equivalent to $(D_1^+,\pi_1^+)$.

A careful study of the equation $\rho_{10}(F_-)\xi=0$ shows that all solutions are of the form $\lambda \xi^+$ for some $\lambda\in\mathbb{C}$.

Indeed, the discussion above shows this already for $m_0\in\{-1,0,1\}$. Furthermore, there are no solution with $m_0>1$, because the condition $\rho_{10}(F_-)\xi=0$ implies immediately $x_1=0=x_2$. And the Raabe-Duhamel test allows to show that there are no solutions with $m_0<1$, either.

The discussion of the condition $\rho_{10}(F_+)\xi=0$, which leads to subrepresentations that are unitarily equivalent to a representation of the form $(D_t^-,\pi_t^-)$ is similary. Set $y_1=1$, $y_2=0$ and let $(y_, \ell)_{\ell\ge 1}$ be the sequence determined from these values via the recurrence relation
\[
y_{\ell+1}= \frac{C_\ell \sqrt{(\ell+m)(\ell +m+1)}}{C_{\ell+1}\sqrt{(\ell-m)(\ell-m+1)}}
\]
with $m=1$. Set $\xi^-=\sum_{\ell=1}^\infty y_\ell \xi_{\ell,-1}$. Then we have
\[
\{\xi\in D_{10}; \rho_{10}(F_+)\xi=0\}=\mathbb{C}\xi^-,
\]
which implies that $(D_{10},\rho_{10}|_{so(2,1)})$ contains a unique subrepresentation that is unitarily equivalent to $(D_{-1}^+,\pi_{-1}^+)$.

Since the spectrum of $\rho_{10}(H_3)$ is equal to $\mathbb{Z}$, it follows that the remaining subrepresentations have to belong to the family $(D^0_{c,0},\pi_{c,0})$, $c<0$. 
\end{proof}

\section{Conclusion}

We have identified the Sch\"urmann triple underlying the factorizable current representations of the Lorentz group in \cite{graev+vershik05,vershik+graev06}.

The decomposition in Proposition \ref{prop-decomp-so21} can be used to compute the classical distribution of elements of $j_{st}(F_++F_-+\lambda H_3)$, since the distributions of elements of this form are known for the irreducible unitary representations of $so(2,1)$, cf.\ \cite{koelink+vanderjeugt} and \cite{accardi+franz+skeide02}, and since the direct sums at the level of the Sch\"urmann triple translate into tensor products for the associated L\'evy processes.

It would be interesting to extend these results to the higher rank groups $O(n,1)$ and $U(n,1)$ in the future.

\section*{Acknowledgements}

UF was supported by the French `Investissements d’Avenir' program, project ISITE-BFC (contract ANR-15-IDEX-03), and by an ANR project (No./ ANR-19-CE40-0002).

This work was presented at the ``International Conference on Infinite Dimensional Analysis, Quantum Probability and Related Topics, QP38'' held at Tokyo University of Science in October 2017, and has been submitted for publication in the proceedings of that meeting. We thank the organizers Noboru Watanabe and Si Si for their hospitality.

\thebibliography{abc99}

\bibitem{accardi+dhahri09a}
Luigi Accardi, Ameur Dhahri,
Quadratic exponential vectors.
J.\ Math.\ Phys. 51 (2010), no.\ 2, 122113.

\bibitem{accardi+dhahri09b}
Luigi Accardi, Ameur Dhahri,
Quadratic exponential vectors.
J.\ Math.\ Phys. 50 (2009), no.\ 12, 122103.

\bibitem{accardi+franz+skeide02}
Luigi Accardi, Uwe Franz, and Michael Skeide, Renormalized squares of white noise and other non-Gaussian noises as Lévy processes on real Lie algebras. Comm.\ Math.\ Phys.\ 228 (2002), no. 1, 123–150.

\bibitem{araki69/70}
Huzuhiro Araki,
Factorizable representation of current algebra. Non commutative extension of the L\'ey-Kinchin formula and cohomology of a solvable group with values in a Hilbert space.
Publ.\ Res.\ Inst.\ Math.\ Sci.\ 5 1969/1970 361–422.

\bibitem{atakishiyev+suslov85}
N.M.\ Atakishiyev and S.K.\ Suslov,
The Hahn and Meixner polynomials of an imaginary argument and some of their applications. J. Phys. A 18 (1985), no. 10, 1583–1596.

\bibitem{delorme77}
P. Delorme,
1-cohomologie des repr\'esentations unitaires des groupes de Lie semi-simples et résolubles, in Produits Tensoriels Continus et Repr\'esentations, Bull.\ Soc.\ Math.\ France 105 (1977), 281–336.

\bibitem{franz+privault16} 
Uwe Franz and Nicolas Privault, Probability on real Lie algebras. Cambridge Tracts in Mathematics, 206. Cambridge University Press, New York, 2016.

\bibitem{franz04}
Uwe Franz, L\'evy process on real Lie algebras. Recent developments in stochastic analysis and related topics, 166–181, World Sci. Publ., Hackensack, NJ, 2004.

\bibitem{gelfand+al63}
I.M.\ Gelfand, R.A.\ Minlos, and Z.Ya.\ Shapiro, Representations of the rotation and Lorentz group and their applications, Translated by C.\ Cummings and T.\ Boddington, translation edited by H.K.\ Farahat.  A Pergamon Press Book The Macmillan Co., New York 1963.

\bibitem{graev+vershik05}
I.M.\ Graev, A.M. Vershik,
The basic representation of the current group $O(n,1)^X$ in the $L^2$ space over the generalized Lebesgue measure.
Indag.\ Math.\ (N.S.) 16 (2005), no.\ 3-4, 499–529.

\bibitem{guichardet80}
Alain Guichardet,
Cohomologie des groupes topologiques et des alg\`ebres de Lie.
Textes Math\'ematiques, 2. CEDIC, Paris, 1980.

\bibitem{guichardet72}
Alain Guichardet,
Symmetric Hilbert spaces and related topics.
Infinitely divisible positive definite functions. Continuous products and tensor products. Gaussian and Poissonian stochastic processes. Lecture Notes in Mathematics, Vol.\ 261. Springer-Verlag, Berlin-New York, 1972.

\bibitem{hotta+wallach75}
R.\ Hotta and N.\ Wallach,
On Matsushima's formula for the Betti numbers of a locally symmetric space, Osaka J.\ Math.\ 12 (1975), 419–431.

\bibitem{koelink+vanderjeugt}
H.T.\ Koelink, J.\ Van Der Jeugt,
Convolutions for orthogonal polynomials from Lie and quantum algebra representations.
SIAM J.\ Math.\ Anal.\ 29 (1998), no.~3, 794-822. 

\bibitem{naimark64}
M.A.\ Naimark,
Linear representations of the Lorentz group,
Translated by Ann Swinfen and O.J.\ Marstrand, translation edited by H.K.\ Farahat. A Pergamon Press Book The Macmillan Co., New York 1964.

\bibitem{nelson}
E.\ Nelson,
Analytic vectors, Ann.\ Math.\ 70 (1959), no.\ 3, 572-615.

\bibitem{parthasarathy+schmidt72}
K.R.\ Parthasarathy and K.\ Schmidt, K.
Positive definite kernels, continuous tensor products, and central limit theorems of probability theory.
Lecture Notes in Mathematics Vol.\ 272. Springer-Verlag, Berlin-New York, 1972.

\bibitem{ruehl70}
W.\ R\"uhl,
The Lorentz group and harmonic analysis. W. A. Benjamin, Inc., New York, 1970.

\bibitem{schuermann93}
Michael Sch\"urmann, White noise on bialgebras. Lecture Notes in Mathematics, 1544. Springer-Verlag, Berlin, 1993.

\bibitem{shalom00}
Yehuda Shalom,
Rigidity, unitary representations of semisimple groups, and fundamental groups of manifolds with rank one transformation group.
Ann.\ of Math.\ (2) 152 (2000), no.\ 1, 113–182.

\bibitem{streater00}
R.F.\ Streater,
Classical and quantum probability. J.\ Math.\ Phys.\ 41 (2000), no.\ 6, 3556–3603.

\bibitem{vershik+al74}
A.M.\ Vershik, I.M.\ Gelfand, and M.I.\  Graev,
Irreducible representations of the group $G^X$ and cohomology. 
Funkcional. Anal. i Prilo\v{z}en.\ 8 (1974), no. 2, 67–69.

\bibitem{vershik+al73}
A.M.\ Vershik, I.M.\ Gelfand, M.I.\ Graev,
Representations of the group $SL(2,R)$, where $R$ is a ring of functions.
Uspehi Mat.\ Nauk 28 (1973), no. 5(173), 83–128.

\bibitem{vershik+graev06}
A.M.\ Vershik and I.M.\ Graev,
The structure of complementary series and special representations of the groups $O(n,1)$ and $U(n,1)$. (Russian. Russian summary) Uspekhi Mat.\ Nauk 61 (2006), no. 5(371), 3--88; translation in
Russian Math.\ Surveys 61 (2006), no. 5, 799–884.

\bibitem{vershik+karpushev82}
A.M.\ Vershik, S.I.\  Karpushev,
Cohomology of groups in unitary representations, neighborhood of the identity and conditionally positive definite functions.
Mat.\ Sb.\ (N.S.) 119(161) (1982), no. 4, 521–533, 590.

\bibitem{vilenkin+klimyk91}
N.Ja.\ Vilenkin, A.U.\ Klimyk,
Representation of Lie groups and special functions. Vol. 1.\ 
Simplest Lie groups, special functions and integral transforms. Translated from the Russian by V.A.\ Groza and A.A.\ Groza. Mathematics and its Applications (Soviet Series), 72. Kluwer Academic Publishers Group, Dordrecht, 1991. 

\end{document}